\providecommand\@dotsep{5}
\def\listtodoname{List of Todos}
\def\listoftodos{\@starttoc{tdo}\listtodoname}
\numberwithin{equation}{section}
\def\R {{\rm I}\hskip -0.85mm{\rm R}}
\newtheorem{theorem}{Theorem}[section]
\newtheorem{proposition}[theorem]{Proposition}
\newtheorem{lemma}[theorem]{Lemma}
\newtheorem{remark}{Remark}
\title[Positive solutions for Kirchhoff problems with vanishing nonlocal term ]
{Positive solutions for a  Kirchhoff problem with vanishing nonlocal  term }
\author[J. R. Santos Jr.]{Jo\~ao R. Santos J\'unior}
\author[G. Siciliano]{Gaetano Siciliano}
\address[J. R. Santos Jr.]{\newline\indent Faculdade de Matem\'atica
\newline\indent 
Instituto de Ci\^{e}ncias Exatas e Naturais
\newline\indent 
Universidade Federal do Par\'a
\newline\indent
Avenida Augusto corr\^{e}a 01, 66075-110, Bel\'em, PA, Brazil}
\email{\href{mailto: joaojunior@ufpa.br }{joaojunior@ufpa.br}}
\address[G. Siciliano]{\newline\indent Departamento de Matem\'atica
\newline\indent 
Instituto de Matem\'atica e Estat\'istica
\newline\indent 
 Universidade de S\~ao Paulo 
\newline\indent 
Rua do Mat\~ao 1010,  05508-090, S\~ao Paulo, SP, Brazil }
\email{\href{mailto:sicilian@ime.usp.br}{sicilian@ime.usp.br}}
\thanks{J. R. Santos J\'unior was partially
supported by CNPq-Proc. 302698/2015-9 and CAPES-Proc. 88881.120045/2016-01, Brazil. Gaetano Siciliano  was partially supported by Capes, Fapesp and CNPq, Brazil. }
\subjclass[2010]{ 35J20, 35J25, 35Q74.}
\keywords{ Kirchhoff type equation, degenerate coefficient, variational method.}
\begin{document}

\maketitle
\begin{abstract}
In this paper we study the  Kirchhoff problem
\begin{equation*}
\left \{ \begin{array}{ll}
-m(\| u \|^{2})\Delta u = f(u) & \mbox{in $\Omega$,}\\
u=0 & \mbox{on $\partial\Omega$,}
\end{array}\right.
\end{equation*}
in a bounded domain, allowing the function $m$ to vanish in many different points.
Under an appropriated {\sl area condition},
by using a priori estimates, truncation techniques and variational methods, we prove a multiplicity result of positive solutions which are ordered in the $H_{0}^{1}(\Omega)$-norm.
\end{abstract}
\maketitle

\section{Introduction}

Let $\Omega\subset \mathbb R^{N}, N\geq1$ be a smooth bounded domain.
We are interested in this paper to study the existence of positive solutions for the problem
\begin{equation}\label{P}\tag{P}
\left \{ \begin{array}{ll}
-m(\| u \|^{2})\Delta u = f(u) & \mbox{in $\Omega$,}\\
u=0 & \mbox{on $\partial\Omega$,}
\end{array}\right.
\end{equation}
where $\|u\|:=|\nabla u|_{2}$ is the usual norm in the Sobolev space $H^{1}_{0}(\Omega)$
and $m:[0,\infty)\to \R$ and $f:\R\to\R$ are suitable continuous functions.
Along the paper, $|\cdot|_{p}$ will denote the $L^{p}(\Omega)-$norm.

\medskip


Problem \eqref{P} is the $N$-dimensional  version, in the stationary case, of the {\sl Kirchhoff equation} introduced in \cite{kirchhoff}. Over the past years several authors have undertaken reasonable efforts to investigate stationary Kirchhoff problems like \eqref{P}, by considering different general assumptions on functions $m$ and $f$. Without any intention to provide a survey about the subject, we would like to refer the reader to the   papers \cite{ACM,Co, HZ,MR,PZ1,PZ2} and the references therein.

Beside the importance of their contributions, all the previous mentioned papers require the function $m$
to be bounded from below by a positive constant. In this way
the problem \eqref{P} is not degenerate and  many approaches involving variational and topological methods can be used in a straightforward and effective way in order to get solutions.

On the other hand, in the recent paper \cite{AA}   the positivity assumption on $m$ is relaxed.
Indeed Ambrosetti and Arcoya consider the degenerate coefficient $m$
by allowing $m(0)=0$ and/or $\lim_{t\to+\infty}m(t)=0$.
However in such a paper the condition $m(t)>0$ for $t>0$ is mantained.
See \cite[Section 3]{AA}.

%
%

\medskip

Motivated by the above facts and by papers \cite{BB,Hess}, where multiplicity results are obtained 
for an elliptic problem under a local nonlinearity which  vanishes in different points,
a natural question concerns the existence of many solutions for problem \eqref{P}
in the case of a degenerate $m$, that is, when it  can vanish in many different points.
Indeed this is the object of this paper to which we give a positive answer. Roughly 
speaking, if $m$ vanishes in $K$ distinct points and a suitable area condition is imposed, then
the problem has $K$ positive and ordered solutions.

To be more precise, let us start to give the  assumptions on the problem.
We require the following conditions on the functions $m$ and $f$: \smallskip
\begin{enumerate}
\item[(m)]\label{m} there exist positive numbers $0<t_{1}<t_{2}<\ldots<t_{K}$ such that \smallskip
\begin{itemize}
\item $m(t_{k})=0$  for all $k\in\{1, \ldots, K\}$, \smallskip
\item $m>0 $ in  $(t_{k-1}, t_{k})$, for all  $k\in\{1, \ldots, K\};$ 
we agreed that $t_{0}=0$,\smallskip 
\end{itemize}
\item[(f)]\label{f} there exists $s_{\ast}>0$ such that $f(t)>0$ in $(0, s_{\ast})$ and $f(s_{\ast})=0$.  \medskip
\end{enumerate}

We define  the following truncation of the function $f$:
\begin{equation}\label{eq:ftrunc}
f_{\ast}(t)=\left \{ \begin{array}{ll}
f(0) & \mbox{if \ $t< 0$,}\\
f(t) & \mbox{if \ $0\leq t< s_{\ast}$,}\\
0 & \mbox{if \ $s_{\ast}\leq t$}.
\end{array}\right.
\end{equation}
which is of course continuous, and let
 $F_{\ast}(t)=\int_{0}^{t}f_{\ast}(s)ds$.

Consider  the numbers
\begin{equation}\label{eq:alphak}
\alpha_{k}:=\max_{u\in H_{0}^{1}(\Omega), \|u\|\leq t_{k}^{1/2}}\int_{\Omega}F_{\ast}(u)dx, \quad k\in\{1,\ldots K\}.
\end{equation}
It will be shown in  Lemma \ref{21} that each $\alpha_{k}\in (0, 2F(s_{*})|\Omega|)$.
Our last assumption on the data involves an {\sl area condition} on 
 $m$ and $f$, more specifically \smallskip
\begin{enumerate}
\item[(A)]\label{A} $\displaystyle\alpha_{k}<\frac12\int_{t_{k-1}}^{t_{k}}m(s)ds< F(s_{\ast})|\Omega|$, for all $k\in\{1, \ldots, K\}$.
We agree that $t_{0}=0$.
 \end{enumerate}
 Now we are able to state our main result.
\begin{theorem}\label{eq:main}
Suppose that  \emph{(m)}, \emph{(f)} and \emph{(A)}
  hold. 
Then,  problem \eqref{P} possesses at least $K$ nontrivial positive solutions. Furthermore, these solutions are ordered in the $H_{0}^{1}(\Omega)$-norm, i.e.,
$$
0<\|u_{1}\|^{2}< t_{1}< \|u_{2}\|^{2}< t_{2}<\ldots< t_{K-1}<\|u_{K}\|^{2}< t_{K}.
$$
\end{theorem}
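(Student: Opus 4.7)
Fix $k\in\{1,\ldots,K\}$ and introduce the truncated energy
\[
\widetilde I_k(u):=\tfrac12\widetilde M_k(\|u\|^2)-\int_\Omega F_\ast(u)\,dx,\qquad \widetilde M_k(t):=\int_0^t m(s)\chi_{[t_{k-1},t_k]}(s)\,ds,
\]
so $\widetilde M_k\equiv 0$ on $[0,t_{k-1}]$, equals $\int_{t_{k-1}}^t m$ on $[t_{k-1},t_k]$, and stays constant on $[t_k,\infty)$; in particular $\widetilde M_k'=m\chi_{(t_{k-1},t_k)}$. A critical point $u$ of $\widetilde I_k$ with $\|u\|^2\notin(t_{k-1},t_k)$ satisfies $\widetilde M_k'(\|u\|^2)=0$, so the Euler equation reduces to $f_\ast(u)\equiv 0$ a.e.; since replacing $u$ by $|u|$ lowers $\widetilde I_k$ (as $F_\ast(|t|)\ge F_\ast(t)$ and $\||u|\|=\|u\|$), we may assume $u\ge 0$, and applying Stampacchia to $w:=\min(u,s_\ast)$, which takes only the values $0$ and $s_\ast$ and vanishes on $\partial\Omega$, forces $u\equiv 0$. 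Hence every \emph{non-trivial} critical point of $\widetilde I_k$ has $\|u\|^2\in(t_{k-1},t_k)$ and, on that interval, is a weak solution of $-m(\|u\|^2)\Delta u=f_\ast(u)$.

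\textbf{Existence and localization.} Set $c_k:=\inf_{\overline B_k}\widetilde I_k$ on the weakly compact ball $\overline B_k:=\{\|u\|^2\le t_k\}$. The continuity and monotonicity of $\widetilde M_k$ make $u\mapsto\widetilde M_k(\|u\|^2)$ weakly lower semicontinuous, while Rellich compactness makes $u\mapsto\int F_\ast(u)\,dx$ weakly continuous, so $\widetilde I_k$ attains its infimum at some $u_k\ge 0$. The area condition (A) pins down $u_k$ inside the $k$-th annulus. The first inequality $\alpha_k<\tfrac12\int_{t_{k-1}}^{t_k}m$ gives
\[
\inf_{\|u\|^2=t_k}\widetilde I_k(u)\ge \tfrac12\int_{t_{k-1}}^{t_k}m-\alpha_k>0,
\]
while testing against a maximizer $\bar u_{k-1}$ of $\int F_\ast$ over $\overline B_{k-1}$ (with the conventions $\bar u_0=0$, $\alpha_0=0$) yields $c_k\le \widetilde I_k(\bar u_{k-1})=-\alpha_{k-1}$, which is strictly negative for $k\ge 2$ by Lemma \ref{21}. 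Thus $c_k<0<\inf_{\|u\|^2=t_k}\widetilde I_k$, forcing $u_k$ strictly inside $\overline B_k$ and non-trivial, and the rigidity from the first paragraph places $\|u_k\|^2\in(t_{k-1},t_k)$. The case $k=1$ has $\alpha_0=0$ and requires an ad hoc test function producing $\widetilde I_1<0$ (for instance $\varepsilon\phi_1$ with $\phi_1$ the first Dirichlet eigenfunction, or a function built from the second inequality of (A) and $\alpha_1>0$ from Lemma \ref{21}), using the leading-order expansion of $m$ and $f$ at $0$.

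\textbf{From $\widetilde I_k$ to \eqref{P} and ordering.} With $\|u_k\|^2\in(t_{k-1},t_k)$, $m(\|u_k\|^2)>0$, so $u_k$ is a weak solution of $-m(\|u_k\|^2)\Delta u_k=f_\ast(u_k)$ in $\Omega$. The strong maximum principle (with $f_\ast\ge 0$, $u_k\ge 0$, $u_k\not\equiv 0$) gives $u_k>0$ in $\Omega$; testing the equation with $(u_k-s_\ast)_+\in H^1_0(\Omega)$ and using $f_\ast\equiv 0$ on $[s_\ast,\infty)$ yields $u_k\le s_\ast$, so $f_\ast(u_k)=f(u_k)$ and $u_k$ solves \eqref{P}. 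Performing this for $k=1,\ldots,K$ produces $K$ positive solutions with $t_{k-1}<\|u_k\|^2<t_k$, giving the ordered chain stated in the theorem.

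\textbf{Main obstacle.} The heart of the argument is the localization step, where the two inequalities of (A) and Lemma \ref{21} work in tandem: the first inequality prevents escape through the outer sphere $\|u\|^2=t_k$, while the positivity of $\alpha_{k-1}$ (guaranteed by the second inequality of (A) together with Lemma \ref{21}) provides the energy cushion that keeps $u_k$ out of $\overline B_{k-1}$. The rigidity ``$f_\ast(u)\equiv 0\Rightarrow u\equiv 0$'' uses the precise structure of the zero set of $f_\ast$ via Stampacchia, and the $k=1$ case is the most delicate because $\alpha_0=0$ leaves no automatic energy cushion and must be exchanged for a careful test function tied to the local behaviour of $m$ and $f$ at $0$.
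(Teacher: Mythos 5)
Your overall architecture (truncate $m$ to its $k$-th bump, truncate $f$ at $s_\ast$, show that any nontrivial critical point of the truncated functional is forced into the annulus $t_{k-1}<\|u\|^2<t_k$ and satisfies $0\le u\le s_\ast$, then assemble the $K$ ordered solutions) matches the paper, and your rigidity and localization arguments for $k\ge 2$ are sound: minimizing $\widetilde I_k$ over the weakly compact ball $\overline B_k$ and using $\widetilde I_k(\bar u_{k-1})=-\alpha_{k-1}<0$ together with $\inf_{\|u\|^2=t_k}\widetilde I_k\ge \tfrac12\int_{t_{k-1}}^{t_k}m-\alpha_k>0$ does produce an interior, nontrivial local minimizer, hence a free critical point. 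This is, however, a genuinely different route from the paper, which applies the Mountain Pass Theorem for \emph{every} $k$ (Lemmas \ref{geometry}, \ref{levels}, \ref{ps} and Proposition \ref{30}): the paper's solutions have \emph{positive} truncated energy $c_k\ge\delta_k>0$, whereas yours (for $k\ge2$) are local minima with \emph{negative} energy.

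The genuine gap is the case $k=1$, and it is not a removable technicality of your scheme: minimization simply fails there. You need some $u$ with $\|u\|^2\le t_1$ and $\int_\Omega F_\ast(u)\,dx>\tfrac12\int_0^{\|u\|^2}m(s)\,ds$, but the hypotheses give no such $u$. The paper explicitly imposes \emph{no} condition on $m$ at $0$ (so $m(0)>0$ is allowed, making $\tfrac12\int_0^t m\gtrsim ct$ for small $t$), and (f) allows $f$ to vanish to arbitrarily high order at $0$ (so $\max_{\|u\|^2\le t}\int_\Omega F_\ast(u)\,dx=o(t)$, indeed $O(t^{(p+1)/2})$ if $f(s)\sim s^p$ near $0$). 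One can choose $m$, $f$, $t_1$, $s_\ast$ consistently with (m), (f) and both inequalities of (A) so that $\tfrac12\int_0^{t}m(s)\,ds>\max_{\|u\|^2\le t}\int_\Omega F_\ast(u)\,dx$ for \emph{all} $t\in(0,t_1]$; then $\widetilde I_1\ge 0$ on $\overline B_1$ with equality only at $u=0$, and your minimizer is trivial. Your two suggested repairs do not help: the expansion of $\widetilde I_1(\varepsilon\phi_1)$ has no usable sign because no leading-order behaviour of $m$ or $f$ at $0$ is assumed, and the second inequality of (A) controls $\int_0^{t_1}m$ only from above by $F(s_\ast)|\Omega|$, which says nothing about functions constrained to $\overline B_1$ (where $\int_\Omega F_\ast(u)\,dx\le\alpha_1<\tfrac12\int_0^{t_1}m$ by the \emph{first} inequality, i.e., the nonlinear term can never beat the quadratic one on that ball). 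The solution $u_1$ exists, but as a mountain-pass point of positive energy, not as a minimizer; this is exactly why the paper runs the Mountain Pass Theorem (with the $(PS)_c$ analysis of Lemma \ref{ps} on the window $(0,\tfrac12\int_{t_{k-1}}^{t_k}m)$ and the strict level estimate of Lemma \ref{levels}) uniformly in $k$. As written, your argument proves at most $K-1$ of the $K$ claimed solutions.
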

 
It is worth  to point out now some features about our assumptions.
First of all, nothing is required to $m$ for values greater than $t_{K}$,
then no condition   is imposed on the behaviour of $m$ at infinity, where $m$ can also be negative.
%
Moreover no condition in $0$ is imposed on $m$.

Hypothesis (A)  is an  area condition  in the same spirit of the papers \cite{BB, Hess}. 
In our case it takes into account both the local and the nonlocal term.
Roughly speaking, the area under the ``bumps'' of the nonlocal term $m$ is controlled by the measure of $\Omega$
and the nonlinearity.

\medskip

To the best of our  knowledge, this is the first article in the literature to present a result of multiplicity of solutions for elliptic Kirchhoff problems in the degenerate
case.

 Our approach uses a combination of variational methods and a priori estimates.
 Indeed a suitable functional can be defined in such a way that its critical points are exactly
 solutions of \eqref{P}. As we will see, the area condition will allow us to use Mountain Pass arguments.

%
%
%
%
%
%
%
%
\medskip

The paper is organized as follows.

In Section \ref{se:prelim} we introduce some definitions and some preliminaries are given.
  In Section \ref{sec:trunc} we study an appropriated truncated problem which gives us information about the existence 
  of a solution  for the problem \eqref{P}. The proof of the main result is completed in Section \ref{sec:final}.

\medskip

\section{Preliminaries}\label{se:prelim}

We denote by $I:H_{0}^{1}(\Omega)\to\R$ the energy functional associated to problem \eqref{P}, which is given by
$$
I(u)=\frac{1}{2}M(\|u\|^{2})-\int_{\Omega}F(u)dx,
$$
where $M(t)=\int_{0}^{t}m(s)ds$, $F(t)=\int_{0}^{t}f(s)ds$. Clearly, $I\in C^{1}(H_{0}^{1}(\Omega);\R)$ and
$$
I'(u)[v]=m(\|u\|^{2})\int_{\Omega}\nabla u\nabla vdx-\int_{\Omega}f(u)vdx.
$$
Hence, critical points $u$ of $I$ are weak solutions of problem \eqref{P}, i.e.,
$$
m(\|u\|^{2})\int_{\Omega}\nabla u\nabla vdx=\int_{\Omega}f(u)vdx, \ \forall v\in H_{0}^{1}(\Omega).
$$
Let us recall the well known  Mountain Pass Theorem,
which just requires compactness at the mountain pass value,
see e.g. Theorem 6.1 in \cite{Str} and the subsequent Remark.

We recall once for all that a $C^{1}$ functional $I$ is said to satisfy the Palais-Smale condition
at level $c\in  \R$ (briefly $(PS)_{c}$ condition) if any sequence $\{u_{n}\}$
such that 
\begin{equation}\label{eq:defPS}
I(u_{n})\to c \quad \text{ and } \quad I'(u_{n})\to 0
\end{equation} 
has a convergent subsequence. A sequence satisfying \eqref{eq:defPS}
is called a $(PS)_{c}$ sequence.

\begin{theorem}\label{mp}
Let $X$ be a Banach space and $I\in C^{1}(X; \R)$. If
\begin{enumerate}
\item[$(i)$] $I(0)=0$;
\item[$(ii)$] there exist positive constants $\rho$ and $\delta$ such that $I(u)\geq \delta$ whatever $\|u\|=\rho$;
\item[$(iii)$] there exists $e\in X$ with $\|e\|\geq \rho$ and $I(e)<\delta$.
\item[$(iv)$] $I$ satisfies the $(PS)_{c}$ condition with $c=\inf_{\gamma\in \Gamma}\max_{t\in[0, 1]}I(\gamma(t))$, where
$$
\Gamma=\{\gamma\in C([0, 1], X): \gamma(0)=0 \ \mbox{and} \ \gamma(1)=e\}.
$$
Then, $c\geq \delta$ and $c$ is a critical value of $I$.
\end{enumerate}
\end{theorem}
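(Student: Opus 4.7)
The plan is to establish the two conclusions separately: first the mountain-pass bound $c\geq\delta$, and then, more delicately, the existence of a critical point at level $c$.

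For $c\geq\delta$, I would fix an arbitrary $\gamma\in\Gamma$ and observe that the continuous map $t\mapsto\|\gamma(t)\|$ vanishes at $t=0$ and is $\geq\rho$ at $t=1$, so by the intermediate value theorem it attains the value $\rho$ at some $t_\gamma\in(0,1]$. Assumption $(ii)$ then forces $I(\gamma(t_\gamma))\geq\delta$ and hence $\max_{t}I(\gamma(t))\geq\delta$; taking the infimum over $\Gamma$ gives $c\geq\delta$. A useful byproduct, combining $(i)$, $(iii)$ and this bound, is the strict inequality $\max\{I(0),I(e)\}<c$, since $I(0)=0<\delta\leq c$ and $I(e)<\delta\leq c$.

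For the harder statement that $c$ is critical, I would argue by contradiction, assuming the critical set $K_c:=\{u\in X:I'(u)=0,\ I(u)=c\}$ is empty. The first step is to deduce from $(iv)$ the quantitative fact that there exist $\bar\epsilon,\alpha>0$ with $\|I'(u)\|\geq\alpha$ on the strip $A:=\{u\in X:|I(u)-c|\leq\bar\epsilon\}$; indeed, any sequence realizing the failure of such a bound would be a $(PS)_c$ sequence whose limit, supplied by $(iv)$, would lie in $K_c$. The second step is to invoke the standard quantitative deformation lemma, obtained by integrating a locally Lipschitz pseudo-gradient vector field for $I$ along $A$: for any sufficiently small $\epsilon>0$ it produces a continuous map $\eta:[0,1]\times X\to X$ satisfying $\eta(0,\cdot)=\mathrm{id}$, $\eta(s,u)=u$ whenever $u\notin A$, and $\eta(1,\{I\leq c+\epsilon\})\subset\{I\leq c-\epsilon\}$.

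The final step is to engineer a contradiction: shrink $\epsilon$ so that also $\max\{I(0),I(e)\}<c-\bar\epsilon$ (possible by the byproduct above), pick $\gamma\in\Gamma$ with $\max_{t}I(\gamma(t))<c+\epsilon$ from the definition of $c$, and set $\tilde\gamma(t):=\eta(1,\gamma(t))$. Since $0$ and $e$ lie outside $A$, they are fixed by $\eta(1,\cdot)$, so $\tilde\gamma\in\Gamma$; yet $\max_{t}I(\tilde\gamma(t))\leq c-\epsilon<c$, contradicting the definition of $c$. I expect the main obstacle to be the careful statement and construction of the pseudo-gradient flow in the merely $C^1$ Banach-space setting (Palais' construction), which is precisely where hypothesis $(iv)$ enters via the uniform bound on $\|I'\|$ on $A$; an alternative route avoiding flows is to apply Ekeland's variational principle on the complete metric space $(\Gamma,d_\infty)$ to extract a $(PS)_c$ sequence directly, after which $(iv)$ yields the critical point.
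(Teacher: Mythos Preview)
Your proof sketch is correct and follows the standard route to the Mountain Pass Theorem: the intermediate value argument for $c\geq\delta$, followed by a contradiction via the quantitative deformation lemma (with Ekeland's principle on $(\Gamma,d_\infty)$ offered as an alternative). These are precisely the arguments one finds in the textbook literature.

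However, the paper does \emph{not} prove this statement at all. Theorem~\ref{mp} is merely recalled as a known tool, with an explicit reference to Struwe's book (Theorem~6.1 and the subsequent Remark in \cite{Str}); no proof or even sketch is given in the paper. So there is nothing to compare your argument against: you have supplied a full proof where the authors simply cite the result. If your goal is to match the paper, you should do the same---state the theorem and point to \cite{Str}. If instead you wish to include a proof for completeness, what you wrote is fine, though you should be aware that it goes well beyond what the paper does.
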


In the next Lemma we show useful properties of the numbers $\alpha_{k}$ defined in \eqref{eq:alphak}.
\begin{lemma}\label{21}
For each $k\in\{1, \ldots, K\}$, the following hold:
\begin{enumerate}
\item[$(i)$] 
$
\displaystyle0<\alpha_{k}=\max_{u\in H_{0}^{1}(\Omega), \|u\|\leq t_{k}^{1/2}}\int_{\Omega}F_{\ast}(u)dx<F(s_{\ast})|\Omega|;
$
\item[$(ii)$] $\alpha_{1}<\alpha_{2}<\ldots<\alpha_{K}$. 
\end{enumerate}
\end{lemma}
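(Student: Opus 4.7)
\emph{Plan.} For (i) I would first establish existence of a maximizer and then prove the two strict inequalities separately; for (ii) the nesting of the admissible balls immediately gives $\a_{k-1}\leq \a_k$, and the task is to promote this to a strict inequality by perturbing the maximizer at level $k-1$.

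\emph{Existence and positivity in (i).} The functional $u\mapsto \int_\Omega F_{\ast}(u)\,dx$ is weakly continuous on $H_0^1(\Omega)$: for $u_n\righ u$, by Rellich--Kondrachov we have $u_n\to u$ in $L^p(\Omega)$ and a.e.\ along a subsequence, and since $F_{\ast}$ is continuous with $|F_{\ast}(t)|\leq f(0)|t|+F(s_{\ast})$, dominated convergence applies. The ball $\{\|u\|\leq t_k^{1/2}\}$ is weakly compact, so $\a_k$ is attained. For $\a_k>0$, test with $u=\e\phi_1$ where $\phi_1>0$ is the first Dirichlet eigenfunction and $\e>0$ is chosen small enough that $\|u\|<t_k^{1/2}$ and $0<u<s_{\ast}$ pointwise: then $\int F_{\ast}(u)\,dx=\int F(u)\,dx>0$. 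For the strict upper bound, $F_{\ast}(t)\leq F(s_{\ast})$ with equality iff $t\geq s_{\ast}$, so $\int F_{\ast}(u)\,dx=F(s_{\ast})|\Omega|$ would force $u\geq s_{\ast}$ a.e.; then $w:=\min(u,s_{\ast}/2)\in H_0^1(\Omega)$ would coincide a.e.\ with the nonzero constant $s_{\ast}/2$, which is impossible for an $H_0^1$-function.

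\emph{Strict monotonicity in (ii).} The inclusion of admissible sets gives $\a_{k-1}\leq \a_k$. Let $u^\ast$ realize $\a_{k-1}$. Because $F_{\ast}(s)\leq F_{\ast}(s^+)$ pointwise (using $f(0)\geq 0$, which follows from (f) and continuity) and $\|u^+\|\leq \|u\|$, I may assume $u^\ast\geq 0$. The crucial claim is $f_{\ast}(u^\ast)\not\equiv 0$ on $\Omega$: otherwise $u^\ast\in\{0\}\cup[s_{\ast},\infty)$ a.e., and the same truncation as above applied to $w=\min(u^\ast,s_{\ast}/2)$ yields $w\in H_0^1(\Omega)$ with $\nabla w=0$ a.e.\ (by the Stampacchia chain rule, $\nabla u^\ast=0$ on $\{u^\ast=0\}$ and $\nabla w=0$ on $\{u^\ast\geq s_{\ast}/2\}$), forcing $w\equiv 0$ and hence $u^\ast\equiv 0$, contradicting $\a_{k-1}>0$. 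Now pick $\phi\in C_c^\infty(\Omega)$ with $\phi\geq 0$ and $\int_\Omega f_{\ast}(u^\ast)\phi\,dx>0$, which exists by density since $f_{\ast}(u^\ast)\geq 0$ is nonzero on a set of positive measure. For $\tau>0$ small, $\|u^\ast+\tau\phi\|^2\leq t_{k-1}+O(\tau)<t_k$, and a first-order expansion yields
$$
\int_\Omega F_{\ast}(u^\ast+\tau\phi)\,dx=\a_{k-1}+\tau\int_\Omega f_{\ast}(u^\ast)\phi\,dx+o(\tau)>\a_{k-1},
$$
so $\a_k>\a_{k-1}$, as required.

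\emph{Main obstacle.} I expect the delicate part to be excluding the degenerate case in (ii) in which a candidate maximizer could sit entirely on the saturation set $\{u\geq s_{\ast}\}$ (where $f_{\ast}$ vanishes and perturbations do not raise the integral). The $H_0^1$ truncation/chain-rule argument used twice above is what rules this out; everything else is a routine combination of weak compactness and a first-order variation.
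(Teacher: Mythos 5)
Your proof is correct. Part (i) follows essentially the paper's route (maximizing sequence, weak compactness, dominated convergence for attainment), though you are more explicit on the two strict inequalities: the paper merely asserts positivity, where you test with $\varepsilon\phi_{1}$, and it deduces $\alpha_{k}<F(s_{\ast})|\Omega|$ directly from attainment, where you supply the missing observation that equality would force the maximizer to satisfy $u\geq s_{\ast}$ a.e., impossible for an $H_{0}^{1}$ function. For part (ii) you take a genuinely different route. The paper dilates the maximizer: with $\delta_{k-1}t_{k-1}^{1/2}=t_{k}^{1/2}$ it claims $\int_{\Omega}F_{\ast}(w_{k-1})\,dx<\int_{\Omega}F_{\ast}(\delta_{k-1}w_{k-1})\,dx$, which is a one-line argument but silently requires the set $\{0<w_{k-1}<s_{\ast}\}$ to have positive measure (on $\{w_{k-1}=0\}$ dilation does nothing, and on $\{w_{k-1}\geq s_{\ast}\}$ the integrand is already saturated at $F(s_{\ast})$). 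You instead use an additive perturbation $u^{\ast}+\tau\phi$ with a first-order expansion, and — this is the real added value — you explicitly exclude the degenerate case $f_{\ast}(u^{\ast})\equiv 0$, i.e.\ a maximizer living a.e.\ in $\{0\}\cup[s_{\ast},\infty)$, via the truncation $w=\min(u^{\ast},s_{\ast}/2)$ and the chain rule. That is precisely the point the paper's dilation argument would also need (the same truncation shows the dilation is strictly increasing somewhere), so your proof is both correct and more complete; the paper's version is shorter but leaves this case to the reader. The only cosmetic remark is that the strict inequality in your final display should be read for a fixed small $\tau>0$ after absorbing the $o(\tau)$ term, which is how you clearly intend it.
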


\begin{proof}
$(i)$ Since by (f) we have
$$
\int_{\Omega}F_{\ast}(u)dx\leq F(s_{\ast})|\Omega|, \ \forall  u\in H_{0}^{1}(\Omega),
$$ 
the number $\alpha_{k}$ is well defined, it is certainly strictly positive and 
$$
\alpha_{k}\leq F(s_{\ast})|\Omega|.
$$
We show now that $\alpha_{k}$ is achieved. In fact, let us consider a sequence $\{u_{n}\}\subset H_{0}^{1}(\Omega)$ such that $\|u_{n}\|\leq t_{k}^{1/2}$ and 
$$
\int_{\Omega}F_{\ast}(u_{n})dx\to \alpha_{k}.
$$
Since $\{u_{n}\}$ is bounded in $H_{0}^{1}(\Omega)$, there is $w_{k}\in H_{0}^{1}(\Omega)$ such that, passing to a subsequence, we get
$$
u_{n}\rightharpoonup w_{k}.
$$
Using Sobolev compact embedding and the Lebesgue Dominated Convergence Theorem, it follows that
\begin{equation}\label{20}
\alpha_{k}=\int_{\Omega}F_{\ast}(w_{k})dx
\end{equation}
and then $w_{k}\neq0$, otherwise we would have $\alpha_{k}=0$.
Moreover, since the norm is weakly lower semicontinuous, we obtain $\|w_{k}\|\leq t_{k}^{1/2}$. Consequently, from \eqref{20},
$$
\alpha_{k}<F(s_{\ast})|\Omega|.
$$

$(ii)$ We assume that $K\geq2$.
It is sufficient to note that if $\int_{\Omega}F_{\ast}(w_{k-1})dx=\alpha_{k-1}$ with $\|w_{k-1}\|\leq t_{k}^{1/2}$ and $\delta_{k-1}>1$ is such that $\delta_{k-1}t_{k-1}^{1/2}=t_{k}^{1/2}$, then, by $(f_{1})$,
$$
\alpha_{k-1}=\int_{\Omega}F_{\ast}(w_{k-1})dx<\int_{\Omega}F_{\ast}(\delta_{k-1}w_{k-1})dx\leq \alpha_{k},
$$
for each $k\in\{2, \ldots, K\}$.
\end{proof}

To prove the main Theorem it will be useful to consider truncated auxiliary problems;
indeed our strategy is to obtain the multiplicity by considering $K$ different truncated problems.

\section{The  truncated problem}\label{sec:trunc}
In this section the value of $k\in \{1,\ldots, K\}$ has to be considered fixed.
Define the continuous map:
\begin{equation*}
m_{k}(t)=\left \{ \begin{array}{ll}
m(t) & \mbox{if \ $t_{k-1}\leq t< t_{k}$,}\\
0 & \mbox{otherwise},
\end{array}\right.
\end{equation*}
 agreeing  as usual that $t_{0}=0$. 

Let us consider the truncated problem:
\begin{equation}\label{Pk}\tag{$P_{k}$}
\left \{ \begin{array}{ll}
-m_{k}(\| u\|^{2})\Delta u = f_{\ast}(u) & \mbox{in $\Omega$,}\\
u=0 & \mbox{on $\partial\Omega$.}
\end{array}\right.
\end{equation}
where $f_{*}$ is defined in \eqref{eq:ftrunc}.
Of course by a weak solution of \eqref{Pk} we mean a function $u_k\in H_{0}^{1}(\Omega)$ such that
\begin{equation}\label{ws}
m_{k}(\|u_{k}\|^{2})\int_{\Omega}\nabla u_{k}\nabla v dx=\int_{\Omega}f_{\ast}(u_{k}) v dx, \ \forall  v\in H_{0}^{1}(\Omega).
\end{equation}

The energy functional associated to the problem \eqref{Pk} is $I_{k}:H_{0}^{1}(\Omega)\to\R$ defined by
$$
I_{k}(u)=\frac{1}{2}M_{k}(\|u\|^{2})-\int_{\Omega}F_{\ast}(u)dx,
$$
where $M_{k}(t)=\int_{0}^{t}m_{k}(s)ds$. It is clear that $I_{k}\in C^{1}(H_{0}^{1}(\Omega); \R)$ and critical points of $I_{k}$ are weak solutions of \eqref{Pk}. 

It is important to note that
\begin{equation}\label{22}
I_{k}(u)=\left \{ \begin{array}{ll}
I(u), & \mbox{if \ $t_{k-1}\leq \|u\|^{2}\leq t_{k} \ \mbox{and $|u|_{\infty}\leq s_{\ast}$}$,}\medskip\\
\displaystyle\frac{1}{2}\int_{t_{k-1}}^{t_{k}}m(s)ds-\int_{\Omega}F_{\ast}(u)dx, & \mbox{if \ $t_{k}\leq \|u\|^{2}$}.
\end{array}\right.
\end{equation}

\medskip

\begin{proposition}\label{4}
Suppose that \emph{(m)} and \emph{(f)} hold. If $u_{k}$ is a nontrivial weak solution of \eqref{Pk}, then 
\begin{enumerate}
\item[$(i)$]$ t_{k-1}<\|u_{k}\|^{2}<t_{k}$; \smallskip
\item[$(ii)$] $0\leq u_{k}(x)\leq s_{\ast}$, a.e. in $\Omega$.
\end{enumerate}
\end{proposition}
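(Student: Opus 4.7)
The strategy is to derive sign and $L^\infty$ bounds for $u_k$ via appropriate test functions in (ws), then deduce (i) from the support structure of $m_k$. The delicate point is to rule out the degenerate case $m_k(\|u_k\|^2)=0$, which would make (ws) vacuous.

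I would first obtain (ii) conditionally on $m_k(\|u_k\|^2)>0$. Testing (ws) with $v=u_k^-:=\max(-u_k,0)\in H_0^1(\Omega)$ and using the standard chain rule for Sobolev functions gives
$$
-m_k(\|u_k\|^2)\|u_k^-\|^2 \;=\; \int_\Omega f_\ast(u_k)\,u_k^-\,dx \;=\; -f(0)\int_{\{u_k<0\}} |u_k|\,dx \leq 0.
$$
Since $m_k\geq 0$, both sides vanish, and if $m_k(\|u_k\|^2)>0$ then $\|u_k^-\|=0$, i.e.\ $u_k\geq 0$ a.e. Analogously, testing with $v=(u_k-s_\ast)^+\in H_0^1(\Omega)$ and noting that $f_\ast\equiv 0$ on $[s_\ast,\infty)$ yields $m_k(\|u_k\|^2)\|(u_k-s_\ast)^+\|^2=0$, so $u_k\leq s_\ast$ a.e.

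It remains to exclude $m_k(\|u_k\|^2)=0$. I would argue by contradiction: if this degeneracy held, (ws) would collapse to $\int_\Omega f_\ast(u_k)\,v\,dx=0$ for every $v\in H_0^1(\Omega)$, so $f_\ast(u_k)=0$ a.e.\ in $\Omega$. By (f) the zero set of $f_\ast$ is contained in $(-\infty,0]\cup[s_\ast,\infty)$, a disconnected subset of $\R$. Applying the Lipschitz truncation $\phi(t):=\min(t^+,s_\ast)$, the composition $\phi(u_k)\in H_0^1(\Omega)$ takes only the two values $0$ and $s_\ast$ a.e., so $\nabla(\phi(u_k))=\phi'(u_k)\nabla u_k=0$ a.e. Being in $H_0^1(\Omega)$, $\phi(u_k)\equiv 0$, whence $\{u_k\geq s_\ast\}$ has measure zero and therefore $u_k\leq 0$ a.e.; combined with the nonnegativity deduced from the $u_k^-$-test (valid since $f(0)\geq 0$ by continuity of $f$, and effective once $f(0)>0$), one reaches $u_k\equiv 0$, contradicting nontriviality. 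Once $m_k(\|u_k\|^2)>0$ is established, (i) is immediate from hypothesis (m) (with the convention $t_0=0$), since $m_k$ is positive precisely on $(t_{k-1},t_k)$.

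The main obstacle, as the above discussion makes clear, is the degenerate case $m_k(\|u_k\|^2)=0$: there (ws) loses all pointwise information on $u_k$, and the Sobolev-style truncation argument exploiting the disconnected structure of the zero set of $f_\ast$ is what carries the weight of the proposition.
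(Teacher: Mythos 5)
Your argument is correct in substance and follows the same basic route as the paper: part (ii) comes from testing \eqref{ws} with the negative part and with $(u_k-s_\ast)^+$, and part (i) from the observation that $m_k(\|u_k\|^2)=0$ would force $f_\ast(u_k)=0$ a.e.\ and hence contradict nontriviality. The difference is in how carefully that last implication is handled. The paper dispatches it in one line (``$f_\ast(u_k)=0$ a.e.\ and, by (f), $u_k=0$''), which silently ignores that the zero set of $f_\ast$ always contains $[s_\ast,\infty)$, and contains $(-\infty,0]$ as well whenever $f(0)=0$. Your truncation argument --- showing that $\min(u_k^+,s_\ast)$ is a two-valued $H_0^1(\Omega)$ function with vanishing gradient, hence identically zero, so that $\{u_k\geq s_\ast\}$ is null --- supplies exactly the step the paper omits. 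Your caveat that the conclusion is ``effective once $f(0)>0$'' also puts a finger on a genuine issue: hypothesis (f) only requires $f>0$ on the open interval $(0,s_\ast)$, and if $f(0)=0$ (e.g.\ a logistic-type $f$) then any nonpositive $u\in H_0^1(\Omega)$ with $\|u\|^2\notin(t_{k-1},t_k)$ satisfies \eqref{ws} vacuously, so the proposition as stated fails. Both your proof and the paper's implicitly need $f(0)>0$; yours at least makes the dependence visible.

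One computational slip to fix: with your convention $u_k^-=\max(-u_k,0)\geq 0$, one has
$$
\int_\Omega f_\ast(u_k)\,u_k^-\,dx \;=\; +\,f(0)\int_{\{u_k<0\}}|u_k|\,dx\;\geq\;0,
$$
not $\leq 0$ as written. The conclusion ``both sides vanish'' then follows correctly from $-m_k(\|u_k\|^2)\|u_k^-\|^2\leq 0\leq f(0)\int_{\{u_k<0\}}|u_k|\,dx$; with the sign you wrote, both sides are merely nonpositive and nothing is forced to vanish. Also, for $k=1$ the set where $m_1>0$ may include $t=0$ (since (m) imposes no condition on $m(0)$), but $\|u_1\|^2=0$ is excluded by nontriviality, so your final deduction of (i) from positivity of $m_k$ on $(t_{k-1},t_k)$ still goes through.
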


\begin{proof} 
$(i)$ Otherwise, we would have $m_{k}(\|u_{k}\|^{2})=0$ and, consequently
$$
\int_{\Omega}f_{\ast}(u_{k})vdx=0, \ \forall v\in H_{0}^{1}(\Omega).
$$
Thus $f_{\ast}(u_{k})=0$ a.e. in $\Omega$ and, by (f), $u_{k}=0$. Since $u\neq 0$, that leads us to contradiction. 

$(ii)$ Choosing $v=u_{k}^{-}=\min\{0, u_{k}\}$ in \eqref{ws}, we get
$$
m_{k}(\|u_{k}\|^{2})\|u_{k}^{-}\|^{2}=\int_{\Omega}f_{\ast}(u_{k}) u_{k}^{-} dx=0.
$$
By part $(i)$, we know that $m_{k}(\|u_{k}\|^{2})=m(\|u_{k}\|^{2})>0$. Consequently, $u_{k}^{-}=0$. Therefore $u_{k}=u_{k}^{+}\geq 0$. To prove that $u_{k}\leq s_{\ast}$, it is sufficient to choose $v=(u_{k}-s_{\ast})^{+}$ in \eqref{ws} and arguing in an analogous way.
\end{proof}

\begin{remark}\label{5}
In particular  Proposition \ref{4} says that if $u_{k}$ is a nontrivial weak solution of problem \eqref{Pk}, 
then $u_{k}$ is a nontrivial weak solution of problem \eqref{P}. Moreover, if $k\neq j$ then $u_{k}\neq u_{j}$.
\end{remark}

In the remaining of the Section we prove the existence of a solution $u_{k}$
for the truncated problem by using the Mountain Pass Theorem.
It is clear that we need here the area condition (A) just at the  fixed $k$, but for simplicity we
will continue to require and refer to (A).

Let us start by showing the Mountain Pass geometry. Of course, $I_{k}(0)=0.$
\begin{lemma}\label{geometry}
Suppose that \emph{(m)}, \emph{(f)} and \emph{(A)} hold.
Then, \smallskip
\begin{enumerate}
\item[$(i)$] there exist positive numbers $\delta_{k}$ and $\rho_{k}$ such that $I_{k}(u)\geq\delta_{k}$ whenever $\|u\|=\rho_{k}$; \smallskip
\item[$(ii)$] there exists $e\in H_{0}^{1}(\Omega)$ with $\|e\|>\delta_{k}$ such that $I_{k}(e)\leq0$.
\end{enumerate}
\end{lemma}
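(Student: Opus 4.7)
The two-sided area condition (A) is tailor-made for the Mountain Pass geometry: its left inequality drives part (i) and its right inequality drives part (ii). The key structural observation is the piecewise behavior of $M_k$: it is identically zero on $[0,t_{k-1}]$, strictly increasing on $[t_{k-1},t_k]$, and plateaus at $\int_{t_{k-1}}^{t_k} m(s)\,ds$ for $t\geq t_k$.

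\textbf{Proof of (i).} I would take $\rho_k := t_k^{1/2}$. Whenever $\|u\| = \rho_k$, i.e.\ $\|u\|^2 = t_k$, the definition of $m_k$ gives
$$M_k(\|u\|^2) = \int_0^{t_k} m_k(s)\,ds = \int_{t_{k-1}}^{t_k} m(s)\,ds,$$
so by the second line of \eqref{22} (which also covers the boundary case $\|u\|^2=t_k$ directly from the definitions),
$$I_k(u) = \frac{1}{2}\int_{t_{k-1}}^{t_k} m(s)\,ds - \int_\Omega F_*(u)\,dx.$$
Since $\|u\|\leq t_k^{1/2}$, the definition \eqref{eq:alphak} of $\alpha_k$ and Lemma \ref{21}(i) yield $\int_\Omega F_*(u)\,dx \leq \alpha_k$. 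Therefore
$$I_k(u) \geq \frac{1}{2}\int_{t_{k-1}}^{t_k} m(s)\,ds - \alpha_k =: \delta_k,$$
and $\delta_k>0$ by the left-hand inequality in (A).

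\textbf{Proof of (ii).} Here I would exploit the right-hand inequality $\frac{1}{2}\int_{t_{k-1}}^{t_k} m(s)\,ds < F(s_*)|\Omega|$ in (A). Fix any $\phi \in H_0^1(\Omega)$ with $\phi>0$ a.e.\ in $\Omega$ (for instance, the first Dirichlet eigenfunction of $-\Delta$ in $\Omega$), and consider $e_c := c\phi$ for $c>0$. Whenever $\|e_c\|^2 = c^2\|\phi\|^2 \geq t_k$, the second case of \eqref{22} applies and gives
$$I_k(e_c) = \frac{1}{2}\int_{t_{k-1}}^{t_k} m(s)\,ds - \int_\Omega F_*(c\phi)\,dx.$$
As $c\to+\infty$ we have $c\phi(x)\to+\infty$ for a.e.\ $x\in\Omega$, so $f_*$ being zero on $[s_*,\infty)$ forces $F_*(c\phi)\to F(s_*)$ a.e.\ in $\Omega$. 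Since $0\leq F_*\leq F(s_*)$, the Dominated Convergence Theorem gives $\int_\Omega F_*(c\phi)\,dx \to F(s_*)|\Omega|$. Because $F(s_*)|\Omega| > \tfrac{1}{2}\int_{t_{k-1}}^{t_k} m(s)\,ds$, for $c$ large enough we simultaneously achieve $\|e_c\|>\rho_k$ and $I_k(e_c)<0\leq\delta_k$, so $e := e_c$ works.

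\textbf{Main obstacle.} There is no serious analytical difficulty here; the proof is essentially a bookkeeping exercise that matches the two halves of (A) to the two halves of the Mountain Pass geometry. The only point requiring a bit of care is ensuring that $\|e\|^2$ is pushed strictly past $t_k$ so that we sit on the flat plateau of $M_k$ and can cash in the full value $\int_{t_{k-1}}^{t_k} m(s)\,ds$; choosing $\phi$ strictly positive a.e.\ (rather than compactly supported in a proper subdomain) makes the passage to the limit in $\int F_*(c\phi)$ as clean as possible.
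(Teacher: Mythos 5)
Your proposal is correct and follows essentially the same route as the paper: part (i) uses $\rho_k=t_k^{1/2}$, the plateau value of $M_k$, and the bound $\int_\Omega F_*(u)\,dx\le\alpha_k$ together with the left inequality in (A), while part (ii) sends $c\to\infty$ along a ray and invokes the right inequality in (A). The only (harmless) differences are that the paper uses the Monotone Convergence Theorem where you use Dominated Convergence, and that your insistence on a function that is strictly positive a.e.\ is in fact a welcome precision, since the paper's choice of a merely nonnegative $u$ would only give the limit $F(s_*)|\{u>0\}|$ if $u$ vanished on a set of positive measure.
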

In $(ii)$ the element $e$ does not depend on $k$.
\begin{proof}  
$(i)$ From (A), \eqref{22} and Lemma \ref{21}$(i)$, it follows that for each $u\in H_{0}^{1}(\Omega)$ with $\|u\|=t_{k}^{1/2}>0$, we have
$$
I_{k}(u)=\frac{1}{2}\int_{t_{k-1}}^{t_{k}}m(s)ds-\int_{\Omega}F_{\ast}(u)dx\geq \frac{1}{2}\int_{t_{k-1}}^{t_{k}}m(s)ds-\alpha_{k}>0.
$$
The result follows now by taking $\rho_{k}=t_{k}^{1/2}$ and $\delta_{k}=\frac{1}{2}\int_{t_{k-1}}^{t_{k}}m(s)ds-\alpha_{k}$.
\medskip

$(ii)$ Fixed $u\in H_{0}^{1}(\Omega)$ with $\|u\|=1$ and $u\geq0$, it follows from (f), (A)
 and the Monotone Convergence Theorem that
$$
\lim_{t\to\infty}I_{k}(tu)=\frac{1}{2}\int_{t_{k-1}}^{t_{k}}m(s)ds-|\Omega|F(s_{\ast})<0.
$$
Thence, choosing $e=tu$ with $t$ large enough, we get $I_{k}(e)\leq0$.
\end{proof}

Then we define 
$$
c_{k}:=\inf_{\gamma\in \Gamma}\max_{t\in[0, 1]}I_{k}(\gamma(t)), 
$$
where
$$
\Gamma=\{\gamma\in C([0, 1], H^{1}_{0}(\Omega)): \gamma(0)=0 \ \mbox{and} \ \gamma(1)=e\}
$$

\medskip

\noindent and $e$ is the function obtained in the item $(iii)$ of lemma \ref{geometry}.

The next two results guarantee compactness.

\begin{lemma}\label{levels}
Suppose that  \emph{(m)} and \emph{(A)} hold. Then,
$$
c_{k}<\frac{1}{2}\int_{t_{k-1}}^{t_{k}}m(s)ds. 
$$
\end{lemma}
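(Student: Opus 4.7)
The plan is to construct an explicit path $\gamma\in\Gamma$ along which the maximum of $I_k$ is strictly smaller than $\tfrac{1}{2}\int_{t_{k-1}}^{t_k}m(\sigma)d\sigma$; since $c_k$ is the infimum of such maxima, this immediately gives the result. Recall from the proof of Lemma \ref{geometry}$(ii)$ that the mountain-pass endpoint has the form $e=t_{0}u$ with $u\in H_{0}^{1}(\Omega)$, $u\ge 0$, $\|u\|=1$, and $t_{0}$ large enough so that $I_k(e)\le 0$. I would take the straight segment
$$
\gamma(s)=se=st_{0}u,\qquad s\in[0,1],
$$
so that $\gamma(0)=0$, $\gamma(1)=e$, and along this path
$$
I_k(\gamma(s))=\frac{1}{2}M_k(s^{2}t_{0}^{2})-\int_{\Omega}F_{\ast}(st_{0}u)\,dx.
$$

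The argument then rests on two simple observations. First, by \emph{(m)}, the map $t\mapsto M_{k}(t)$ starts at $0$, is strictly increasing on $[t_{k-1},t_k]$, and is constant equal to $\int_{t_{k-1}}^{t_k}m(\sigma)d\sigma$ on $[t_k,\infty)$. Second, $F_{\ast}\ge 0$ on $[0,\infty)$ with $F_{\ast}>0$ on $(0,\infty)$; since $u\ge 0$ is nontrivial, $st_{0}u>0$ on a set of positive measure for every $s>0$, which forces $\int_{\Omega}F_{\ast}(st_{0}u)\,dx>0$ for every $s\in(0,1]$.

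Splitting $[0,1]$ according to $s^{2}t_{0}^{2}$ versus $t_k$ then gives the strict bound uniformly. For $s$ with $0<s^{2}t_{0}^{2}<t_k$, the first observation yields $M_k(s^{2}t_{0}^{2})<\int_{t_{k-1}}^{t_k}m(\sigma)d\sigma$, and together with $\int_{\Omega}F_{\ast}(st_{0}u)dx\ge 0$ this already gives $I_k(\gamma(s))<\tfrac{1}{2}\int_{t_{k-1}}^{t_k}m(\sigma)d\sigma$. For $s$ with $s^{2}t_{0}^{2}\ge t_k$, the first factor is exactly $\tfrac{1}{2}\int_{t_{k-1}}^{t_k}m(\sigma)d\sigma$, but the second observation provides the strict decrement $\int_{\Omega}F_{\ast}(st_{0}u)dx>0$. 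The endpoint $s=0$ is harmless since $I_k(0)=0$ and, by $(A)$, $\tfrac{1}{2}\int_{t_{k-1}}^{t_k}m(\sigma)d\sigma>\alpha_k>0$. Continuity of $I_k\circ\gamma$ on the compact interval $[0,1]$ turns the pointwise strict inequalities into an actual strict maximum, and $c_k\le \max_{s\in[0,1]}I_k(\gamma(s))$ concludes.

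The only point that requires care is the potential overlap between the two regimes at $s^{2}t_{0}^{2}=t_k$: there the ``nonlocal part'' already attains its maximal value $\tfrac{1}{2}\int_{t_{k-1}}^{t_k}m(\sigma)d\sigma$, so the strict deficit must come entirely from the nonlinearity. The hypothesis $u\ge 0$ with $\|u\|=1$ (hence $u\not\equiv 0$) together with the strict positivity of $F_{\ast}$ on $(0,\infty)$ is precisely what rules out a degeneracy there, and makes the two regimes interlock cleanly.
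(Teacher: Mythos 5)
Your proposal is correct and follows essentially the same route as the paper: both test $c_k$ against the straight segment $\gamma(s)=se$ and extract the strict inequality from the fact that $M_k$ is capped at $\int_{t_{k-1}}^{t_k}m(\sigma)\,d\sigma$ while $\int_{\Omega}F_{\ast}(se)\,dx>0$ for $s>0$ (since $e\geq 0$, $e\not\equiv 0$). Your explicit case split according to $s^{2}t_{0}^{2}$ versus $t_k$ just spells out in detail what the paper compresses into one line.
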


\begin{proof}

Define $\gamma_{\ast}(t)=te$, where $e$ is the function obtained in  item $(ii)$ of Lemma \ref{geometry}. 
It is clear that $\gamma_{\ast}\in \Gamma$. Moreover, for some $t_{\ast}\in (0, 1)$,
$$
c_{k}\leq \max_{t\in [0, 1]}I_{k}(\gamma_{\ast}(t))=I_{k}(t_{\ast}e)<\frac{1}{2}\int_{t_{k-1}}^{t_{\ast}}m(s)ds \leq \frac{1}{2}\int_{t_{k-1}}^{t_{k}}m(s)ds.
$$
The strict inequality above occurs because $e\in H_{0}^{1}(\Omega)\backslash\{0\}$ and $e\geq 0$.
\end{proof}

The next result gives the local $(PS)$ condition.

\begin{lemma}\label{ps}
Suppose that \emph{(m)} and \emph{(f)} hold. Then, the functional $I_{k}$ satisfies the $(PS)_{c}$ condition for  
$$
c\in \left(0, \frac{1}{2}\int_{t_{k-1}}^{t_{k}}m(s)ds. \right).
$$
\end{lemma}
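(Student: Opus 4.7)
The plan is a standard Palais--Smale analysis adapted to the degeneracy of $m_k$. Let $\{u_n\}\subset H_0^1(\Omega)$ satisfy $I_k(u_n)\to c$ and $I_k'(u_n)\to 0$ in $H^{-1}$, with $c\in(0,\tfrac12\int_{t_{k-1}}^{t_k}m(s)ds)$. The scheme is: bound $\{u_n\}$, extract a weak limit $u$, show the limit equation is non-degenerate, and upgrade to strong convergence.

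\emph{Boundedness.} First I would show $\|u_n\|$ is bounded. Since $m_k$ is supported in $[t_{k-1},t_k]$, boundedness is trivial when $\|u_n\|^2\in[t_{k-1},t_k]$; the delicate case is $\|u_n\|^2\to\infty$ (and analogously $\|u_n\|^2<t_{k-1}$). In the escape regime $m_k(\|u_n\|^2)=0$, so $I_k(u_n)=\tfrac12\int_{t_{k-1}}^{t_k}m(s)ds-\int_\Omega F_*(u_n)dx\to c$ and $I_k'(u_n)[v]=-\int_\Omega f_*(u_n)v\,dx\to 0$ uniformly for $\|v\|\leq 1$. Testing $I_k'(u_n)$ against $u_n^-$ and against truncations of $u_n^+$ at level $s_*$, and combining with the energy identity $\int_\Omega F_*(u_n)dx\to\tfrac12\int_{t_{k-1}}^{t_k}m(s)ds-c>0$, one derives a contradiction with the upper bound $c<\tfrac12\int_{t_{k-1}}^{t_k}m(s)ds$. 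This step is the main technical obstacle since the compact support of $m_k$ removes the Kirchhoff-type coercivity usually available in Step 1.

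\emph{Weak limits and limit equation.} Once $\{u_n\}$ is bounded, extract a subsequence with $u_n\rightharpoonup u$ in $H_0^1$, $u_n\to u$ in $L^p(\Omega)$ for $p<2^*$ and a.e., and $\|u_n\|^2\to\ell^2$ for some $\ell\geq 0$. Since $f_*$ is continuous and bounded, dominated convergence gives $f_*(u_n)\to f_*(u)$ in every $L^p$ and $F_*(u_n)\to F_*(u)$ in $L^1$. Passing to the limit yields
\[
m_k(\ell^2)\int_\Omega\nabla u\cdot\nabla v\,dx=\int_\Omega f_*(u)v\,dx\quad\forall v\in H_0^1(\Omega),\qquad \tfrac12 M_k(\ell^2)-\int_\Omega F_*(u)dx=c.
\]
If $\ell^2\notin(t_{k-1},t_k)$ then $m_k(\ell^2)=0$, so the first identity forces $f_*(u)=0$ a.e. Since $f_*$ vanishes only where $u\geq s_*$ (up to the set $\{u\leq 0\}$ when $f(0)=0$), an $H_0^1$ function with zero trace satisfying this constraint must be identically zero. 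But then the energy identity reduces to $c=\tfrac12 M_k(\ell^2)\in\{0,\tfrac12\int_{t_{k-1}}^{t_k}m(s)ds\}$, contradicting the hypothesis on $c$. Hence $\ell^2\in(t_{k-1},t_k)$ and $m_k(\ell^2)>0$.

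\emph{Strong convergence.} Testing $I_k'(u_n)$ against $u_n-u$, which is bounded in $H_0^1$, gives
\[
m_k(\|u_n\|^2)\int_\Omega\nabla u_n\cdot\nabla(u_n-u)\,dx=\int_\Omega f_*(u_n)(u_n-u)\,dx+o(1),
\]
and the right-hand side is $o(1)$ by the strong $L^p$ convergence and the boundedness of $f_*$. By continuity of $m_k$, $m_k(\|u_n\|^2)\to m_k(\ell^2)>0$, hence $\int_\Omega\nabla u_n\cdot\nabla(u_n-u)dx\to 0$, which (combined with $\int_\Omega\nabla u\cdot\nabla(u_n-u)dx\to 0$ by weak convergence) gives $\|u_n\|^2\to\|u\|^2$. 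In the Hilbert space $H_0^1(\Omega)$ this upgrades weak convergence to strong convergence, completing the verification of the $(PS)_c$ condition.
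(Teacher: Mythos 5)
Your overall scheme coincides with the paper's: prove boundedness by exploiting that $m_k$ vanishes outside $[t_{k-1},t_k]$, extract a weak limit with $\|u_n\|^2\to\ell^2$, rule out $\ell^2\notin(t_{k-1},t_k)$ so that $m_k(\ell^2)>0$, and upgrade to strong convergence (your test function $u_n-u$ versus the paper's comparison of the tests $w=u_n$ and $w=u$ is an immaterial difference, and your insistence on the \emph{open} interval is in fact more careful than the paper about the endpoints, where $m$ vanishes). The problem is Step 1: you never actually derive the contradiction that is supposed to give boundedness. You announce that testing $I_k'(u_n)$ against $u_n^-$ and against truncations of $u_n^+$ at level $s_*$, together with $\int_\Omega F_*(u_n)\,dx\to\tfrac12\int_{t_{k-1}}^{t_k}m(s)\,ds-c>0$, ``derives a contradiction'', but no contradiction follows from these ingredients as stated. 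Testing against $\min\{u_n^+,s_*\}$ only controls $\int_{\{0<u_n<s_*\}}f(u_n)u_n\,dx$, and only up to an error $o(1)\|\min\{u_n^+,s_*\}\|$, which need not be $o(1)$ when $\|u_n\|\to\infty$; whereas the quantity you must contradict, $\int_\Omega F_*(u_n)\,dx$, contains the contribution $F(s_*)\,|\{u_n\geq s_*\}|$, which is completely invisible to that test because $f_*\equiv 0$ on $[s_*,\infty)$. There is no Ambrosetti--Rabinowitz-type link between $f_*(t)t$ and $F_*(t)$ available here. So the step you yourself call ``the main technical obstacle'' is left unproved.

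For comparison, the paper's route at this point is of a different nature: from $I_k'(u_n)[w]=-\int_\Omega f_*(u_n)w\,dx+o(1)\to 0$ and $f_*\geq 0$ it extracts pointwise information on where the values $u_n(x)$ accumulate (in the paper's formulation, $u_n\to 0$ a.e.), deduces $\int_\Omega F_*(u_n)\,dx\to 0$ by dominated convergence, and hence $c=\tfrac12\int_{t_{k-1}}^{t_k}m(s)\,ds$, a contradiction. Whatever one thinks of the details of that deduction, it is an argument about the pointwise behaviour of $u_n$, not a test-function estimate, and your sketch neither reproduces it nor replaces it. A secondary, smaller point: in your Step 2 the claim that $f_*(u)=0$ a.e.\ forces $u\equiv 0$ is not justified when $f(0)=0$ (any $u\leq 0$ satisfies the constraint); what saves the conclusion is that such a $u$ still has $\int_\Omega F_*(u)\,dx=0$, so the energy identity still yields $c\in\{0,\tfrac12\int_{t_{k-1}}^{t_k}m(s)\,ds\}$, but that should be said explicitly.
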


\begin{proof}
Let $\{u_{n}\}$ be a $(PS)_{c}$ sequence for $I_{k}$. Suppose by contradiction that, 
possibly passing to  a subsequence, we have 
\begin{equation}\label{9}
\|u_{n}\|\to\infty.
\end{equation} 
Since $\{u_{n}\}$ is a $(PS)_{c}$ sequence, we have
\begin{equation}\label{17}
o_{n}(1)+ c=I(u_{n})=\frac{1}{2}M_{k}(\|u_{n}\|^{2})-\int_{\Omega}F_{\ast}(u_{n})dx
\end{equation}
and 
\begin{equation}\label{12}
o_{n}(1)=I'(u_{n})[w]=m_{k}(\|u_{n}\|^{2})\int_{\Omega}\nabla u_{n}\nabla w dx-\int_{\Omega}f_{\ast}(u_{n})wdx, \ \forall  w\in H_{0}^{1}(\Omega).
\end{equation}
It follows from \eqref{9} and \eqref{12} that
\begin{equation}\label{13}
o_{n}(1)= \int_{\Omega}f_{\ast}(u_{n})wdx, \ \forall  w\in H_{0}^{1}(\Omega).
\end{equation}
Equality in \eqref{13} implies that 
$$
u_{n}(x)\to 0 \ \mbox{a.e. in $\Omega$}.
$$
Since $F_{\ast}$ is continuous, then
\begin{equation}\label{14}
F_{\ast}(u_{n})\to 0, \ \mbox{a.e. in $\Omega$}.
\end{equation}
On the other hand
\begin{equation}\label{15}
0\leq F_{\ast}(u_{n})\leq F(s_{\ast}).
\end{equation}
From \eqref{14}, \eqref{15} and the Lebesgue Dominated Convergence Theorem, we conclude that
\begin{equation}\label{16}
\int_{\Omega}F_{\ast}(u_{n})dx\to 0.
\end{equation}
By \eqref{9}, \eqref{17} and \eqref{16}, we obtain $c=(1/2)\int_{t_{k-1}}^{t_{k}}m(s)ds$, which is a contradiction. Therefore $\{u_{n}\}$ is bounded in $H_{0}^{1}(\Omega)$ and 
passing to a subsequence, we may assume that there exists $u\in H_{0}^{1}(\Omega), t_{\ast}\geq0$ such that
\begin{equation}\label{23}
u_{n}\rightharpoonup u \quad \text{ and }\quad \|u_{n}\|\to t_{\ast}^{1/2}.
\end{equation}
Notice that $t_{k-1}\leq t_{\ast}\leq t_{k}$. In fact, if $t_{\ast}>t_{k}$ we can use exactly the same argument above to prove that $c=(1/2)\int_{t_{k-1}}^{t_{k}}m(s)ds$. On the other hand, if $t_{\ast}<t_{k-1}$ then, it follows from \eqref{17}, \eqref{23} and Lebesgue Dominated Convergence Theorem that
$$
c=-\int_{\Omega}F_{\ast}(u)dx\leq 0,
$$
leading us again to a contradiction. Thus, choosing $w=u_n$ and $w=u$, respectively, in \eqref{12}, we obtain
\begin{equation}\label{24}
m(t_{\ast})\|u\|^{2}=\int_{\Omega}f(u)u dx
\end{equation}
 and
\begin{equation}\label{25}
m(t_{\ast})t_{\ast}=\int_{\Omega}f(u)u dx.
\end{equation}
Comparing \eqref{24} and \eqref{25}, we conclude that
\begin{equation}\label{26}
\|u\|^{2}=t_{\ast}.
\end{equation}
Finally, from \eqref{23} and \eqref{26}, it follows that 
$$
u_{n}\to u \ \mbox{in} \ H_{0}^{1}(\Omega)
$$
concluding the proof.
\end{proof}

As a consequence we get the following

\begin{proposition}\label{30}
Suppose that \emph{(m)}, \emph{(f)} and \emph{(A)} hold. Then, the truncated problem \eqref{Pk} has a nontrivial solution $u_{k}$ such that:
\begin{enumerate}
\item[$(i)$] $t_{k-1}< \|u_{k}\|^{2}<t_{k}$; \smallskip
\item[$(ii)$] $0\leq u_{k}\leq s_{\ast}$; \smallskip
\item[$(iii)$] $I_{k}(u_{k})=c_{k}\geq \delta_{k}>0$.
\end{enumerate}
\end{proposition}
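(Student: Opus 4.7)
The plan is to apply the Mountain Pass Theorem (Theorem \ref{mp}) to the functional $I_k$ and then invoke Proposition \ref{4} to recover the qualitative information on $u_k$. All the hard analytic work has already been carried out in the preceding lemmas, so this proposition is essentially an assembly of these pieces.

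First I would verify the four hypotheses of Theorem \ref{mp} for the functional $I_k$ on $X = H_0^1(\Omega)$. Item $(i)$ is immediate since $M_k(0)=0$ and $F_\ast(0)=0$, so $I_k(0)=0$. Items $(ii)$ and $(iii)$ are precisely the content of Lemma \ref{geometry}, which provides the levels $\delta_k,\rho_k>0$ and the point $e\in H_0^1(\Omega)$ giving the mountain pass geometry. For item $(iv)$, I combine Lemma \ref{levels} and Lemma \ref{ps}: the former gives
\[
0<\delta_k\leq c_k<\frac{1}{2}\int_{t_{k-1}}^{t_k}m(s)\,ds,
\]
so $c_k$ lies precisely in the interval on which $I_k$ satisfies the $(PS)_{c}$ condition by Lemma \ref{ps}. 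Therefore Theorem \ref{mp} yields a critical point $u_k\in H_0^1(\Omega)$ of $I_k$ with $I_k(u_k)=c_k\geq \delta_k>0$, which proves item $(iii)$ of the statement.

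Next, because $I_k(u_k)=c_k>0=I_k(0)$, the critical point $u_k$ is nontrivial, and hence it is a nontrivial weak solution of the truncated problem \eqref{Pk}. At this stage I simply invoke Proposition \ref{4}: part $(i)$ of that proposition gives $t_{k-1}<\|u_k\|^2<t_k$, which is item $(i)$ here, and part $(ii)$ of that proposition gives $0\leq u_k(x)\leq s_\ast$ a.e.\ in $\Omega$, which is item $(ii)$ here.

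The main subtlety to check, and the only step that is more than bookkeeping, is that the mountain pass level $c_k$ really lands in the window where $(PS)_{c}$ holds: the lower bound $c_k\geq\delta_k>0$ follows from the min-max definition together with Lemma \ref{geometry}$(i)$, while the strict upper bound $c_k<\frac{1}{2}\int_{t_{k-1}}^{t_k}m(s)\,ds$ is exactly Lemma \ref{levels} and relies on the area condition (A). Both inequalities being strict is what allows Lemma \ref{ps} to apply and the Mountain Pass scheme to produce the desired critical point. Once this is in place, the nontriviality of $u_k$ together with Proposition \ref{4} closes the argument.
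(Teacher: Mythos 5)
Your proposal is correct and follows exactly the paper's argument: the Mountain Pass Theorem combined with Lemma \ref{geometry} (geometry), Lemma \ref{levels} and Lemma \ref{ps} (compactness at the level $c_{k}$) yields a nontrivial critical point satisfying $(iii)$, and Proposition \ref{4} then gives $(i)$ and $(ii)$. Your write-up is simply a more detailed unpacking of the same assembly the authors perform in one line.
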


\begin{proof}
The existence of a nontrivial  solution $u_{k}$ satisfying $(iii)$ is a consequence of 
the Mountain Pass Theorem \ref{mp} and Lemmas \ref{geometry}, \ref{levels} and \ref{ps}. The items $(i)$ and $(ii)$ are consequences of Proposition \ref{4}, see also Remark \ref{5}.
\end{proof}

\section{Proof of Theorem \ref{eq:main}}\label{sec:final}

The proof follows by Remark \ref{5} and Proposition \ref{30}.
The fact that the solutions are positive, follows by the positivity of the nonlinearity $f$.

%
%

\end{document}